\journal{CAGD}
\def\sint{\ifmmode{- \!\!\!\!\!\! \int}
    \else{\hbox{$- \!\!\!\! \int \ $}}\fi}
\newtheorem{theorem}{Theorem}
\begin{document}

\begin{frontmatter}



\title{A note on convergence analysis of NURBS curve when weights approach infinity}

\author[label1]{Mao Shi}
\address[label1]{School of Mathematics and Information Science
of Shaanxi Normal University, Xi¡¯an 710062, China}

\cortext[cor1]{Email: shimao@snnu.edu.cn}

\begin{abstract}
This article considers the convergence of NURBS curve when weights approach infinity. We shows that limit of  NURBS curve dose not exist when independent variables weights approach infinity. Further, pointwise convergence uniform convergence and $L^1$ convergence are researched.
\end{abstract}

\begin{keyword}
NURBS curve; Limit of Multivariable; Pointwise Convergence; $L^1$ Convergence;



\end{keyword}

\end{frontmatter}


\section{Introduction}
NURBS curve is the key tools in Computer Aided Geometric Design (CAGD).
Let knot vector ${\mathbf{U}} = \{ u_0 ,...,u_m \}$ be a non-decreasing sequence of real numbers. The $p$th-degree NURBS curve on closed interval $[u_i,u_{i+1}]$ can be defined as
\[
{\mathbf{C}}(u) = \frac{{\sum\limits_{j = i - p}^i {N_{j,p} (u)\omega _j {\mathbf{p}}_j } }}
{{\sum\limits_{j = i - p}^i {N_{j,p} (u)\omega _j } }} = \sum\limits_{j = i - p}^i {R_{j,p}^\omega (u) {\mathbf{p}}_j }
\]
where $\{\mathbf{p}_j\}$ are control points, $\{\omega_j\}$ are weights, $\{N_{j,p}(u)\}$ are the $p$th-degree B-spline basis and $\{R_{j,p}^\omega(u)\}$ are  rational basis functions.

  The weights $\omega_j$ are typically used as shape parameters \cite{Farin}.  When ones of $\omega_j$ approach infinity, some results were obtain. For example, using weighted least squares, Piegl and Tiller\cite{Piegl} obtain that the NURBS curve  approaches  approximated data points and  the limit curve  consists of segment lines. Goldman \cite{Goldman} prove the NURBS curve will approaches the three control points.  Here, we  give another two results.

  \textbf{Example 1.} Let $\mathbf{U}=\{0,0,0,0,1,1,1,1\}$ and $\boldsymbol{\omega}=\{1,\omega_1, \omega_ 2, 1\}$. If $\omega_2 \asymp k\omega_1, k\neq 0$, we have the following result for cubic NURBS curve as $\omega_1 \rightarrow +\infty$.
  $$
  \lim_{\omega_1\to +\infty \atop \omega_2\to +\infty}\frac{{\sum\limits_{j = i - p}^i {N_{j,p} (u)\omega _j {\mathbf{p}}_j } }}
{{\sum\limits_{j = i - p}^i {N_{j,p} (u)\omega _j } }}=
\left\{ {\begin{array}{*{20}c}
   {{\mathbf{p}}_0 } & {u = u_3 }  \\
   {\frac{{N_1 (u){\mathbf{p}}_1  + kN_2 (u){\mathbf{p}}_2 }}
{{N_1 (u) + N_2 (u)}}} & {u \in (u_3 ,u_4 )}  \\
   {{\mathbf{p}}_3 } & {u = u_4 }  \\

 \end{array} }; \right.
$$
if  $\omega_2 \asymp k\omega_1^2$, we have the result for $\omega_1 \rightarrow +\infty$.
  $$
  \lim_{\omega_1\to +\infty \atop \omega_2\to +\infty}\frac{{\sum\limits_{j = i - p}^i {N_{j,p} (u)\omega _j {\mathbf{p}}_j } }}
{{\sum\limits_{j = i - p}^i {N_{j,p} (u)\omega _j } }}=
\left\{ {\begin{array}{*{20}c}
   {{\mathbf{p}}_0 } & {u = u_3 }  \\
    {{\mathbf{p}}_2 }& {u \in (u_3 ,u_4 )}  \\
   {{\mathbf{p}}_3 } & {u = u_4 }  \\

 \end{array} } \right..
$$

 In this paper, we using analysis methods to explain these results.
\section{Main results}
From the limit definition about multivariable and the \textbf{Example 1}, we can say that
\begin{theorem}
  The limit of NURBS curve does not exist when the $j \ (1<j \leq p+1)$ independent variables $\omega_j$ approach  infinity.
\end{theorem}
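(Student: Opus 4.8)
The plan is to read the statement through the definition of the limit of a function of several variables: if $\mathbf{C}(u)$ — regarded, for fixed $u$, as a function of the $j$ weights $\omega_{a_1},\dots,\omega_{a_j}$ that tend to infinity, the remaining weights being held fixed — had a limit as $(\omega_{a_1},\dots,\omega_{a_j})\to(+\infty,\dots,+\infty)$, then the same value would be reached along every path in the positive orthant running off to the point at infinity. Hence it suffices to exhibit two such paths along which $\mathbf{C}(u)$ tends to different limit functions of $u$. This is precisely the phenomenon already displayed in Example~1, where the two scalings $\omega_2\asymp k\omega_1$ and $\omega_2\asymp k\omega_1^2$ forced different limit curves; I would isolate that mechanism and show it persists for any choice of $j\ (1<j\le p+1)$ of the weights.

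I would work on a nondegenerate knot span $[u_i,u_{i+1}]$ with $u_i<u_{i+1}$, using the standard facts that the basis functions $N_{i-p,p},\dots,N_{i,p}$ are strictly positive on $(u_i,u_{i+1})$ and that any subcollection of them is linearly independent there (they form a basis of the degree-$p$ polynomials on that span). For the $j$ indices $a_1,\dots,a_j\in\{i-p,\dots,i\}$ whose weights blow up, consider the path on which all of $\omega_{a_1},\dots,\omega_{a_j}$ equal a parameter $t\to+\infty$, and the path on which $\omega_{a_1}=\cdots=\omega_{a_{j-1}}=t$ while $\omega_{a_j}=t^2$, with $t\to+\infty$; along both, every weight in the list tends to $+\infty$. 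Dividing the numerator and denominator of $\mathbf{C}(u)$ by the dominant power of $t$ and passing to the limit gives, for each $u\in(u_i,u_{i+1})$,
\[
\mathbf{C}(u)\ \longrightarrow\ \mathbf{q}(u):=\frac{\sum_{l=1}^{j} N_{a_l,p}(u)\,\mathbf{p}_{a_l}}{\sum_{l=1}^{j} N_{a_l,p}(u)}
\quad\text{on the first path,}\qquad
\mathbf{C}(u)\ \longrightarrow\ \mathbf{p}_{a_j}
\quad\text{on the second,}
\]
because the contributions of the fixed weights and of the lower powers of $t$ are negligible while the surviving denominators are positive.

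Finally I would verify that the two limit functions really are distinct. Subtracting,
\[
\mathbf{q}(u)-\mathbf{p}_{a_j}=\frac{\sum_{l=1}^{j-1} N_{a_l,p}(u)\,(\mathbf{p}_{a_l}-\mathbf{p}_{a_j})}{\sum_{l=1}^{j} N_{a_l,p}(u)},
\]
and the linear independence of $N_{a_1,p},\dots,N_{a_{j-1},p}$ on $(u_i,u_{i+1})$ shows this is not the zero function unless $\mathbf{p}_{a_1}=\cdots=\mathbf{p}_{a_j}$; choosing the ``dominant'' index $a_j$ so that some other selected control point differs from $\mathbf{p}_{a_j}$ (possible precisely when the selected control points are not all equal, the degenerate case in which the curve is already a constant point), one gets $\mathbf{q}(u)\ne\mathbf{p}_{a_j}$ on a whole subinterval, and the pointwise limit of $\mathbf{C}(u)$ fails to exist for all $u$ in an open set. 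Thus the limit of the NURBS curve does not exist. The one point requiring care — and the reason the conclusion is most naturally phrased as ``there are parameter values, indeed an interval of them, at which no limit exists'' — is the behaviour at the endpoints $u=u_i,u_{i+1}$, where only a single basis function survives and the two path limits happen to coincide; this matches the piecewise description in Example~1.
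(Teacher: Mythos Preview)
Your argument is correct and is in fact considerably more complete than what the paper offers: the paper gives no proof of this theorem beyond the sentence ``From the limit definition about multivariable and the \textbf{Example 1}, we can say that\ldots'', i.e.\ it simply points to the two path-computations in Example~1 as witnesses that the multivariable limit is path-dependent. Your proposal is exactly a rigorous generalisation of that idea---constructing two explicit paths (equal growth versus one dominant weight), computing the limits, and then using linear independence of the B-spline basis on $(u_i,u_{i+1})$ to certify that the limits genuinely differ for generic control points---so the approaches coincide, with yours supplying the details the paper omits.
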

However, if the weights $\omega_j$  along an approach path we have the following theorems.
\begin{theorem}
  The NURBS curve ${\mathbf{C}}(u)$ on the interval $[u_i, u_{i+1}]$ is convergent pointwise when weights $\omega_j\rightarrow +\infty$.  
\end{theorem}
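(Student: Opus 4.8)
The plan is to reduce the vector statement to a scalar one and then pass to the limit after dividing by the dominant weight. First I would fix a point $u\in[u_i,u_{i+1}]$ and write $n_j:=N_{j,p}(u)$ for $j=i-p,\dots,i$: these are fixed non-negative numbers with $\sum_{j=i-p}^{i}n_j=1$, so the index set $\{k:n_k>0\}$ is non-empty (and at a knot it may be a proper subset). Along an approach path the weights become functions $\omega_j=\omega_j(t)$ with $t\to+\infty$, where $\omega_j(t)\to+\infty$ for $j$ in some set $J$ while the other weights stay positive and bounded. Since
\[
\mathbf{C}(u)=\sum_{j=i-p}^{i}R_{j,p}^{\omega}(u)\,\mathbf{p}_j,\qquad R_{j,p}^{\omega}(u)=\frac{n_j\,\omega_j(t)}{\sum_{k=i-p}^{i}n_k\,\omega_k(t)},
\]
and the control points are fixed, it is enough to show that each scalar coefficient $R_{j,p}^{\omega}(u)$ converges as $t\to+\infty$; the limit of $\mathbf{C}(u)$ is then the resulting convex combination of the $\mathbf{p}_j$, the coefficients again summing to $1$.

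Next I would normalise by the largest relevant weight. Put $M(t):=\max\{\omega_k(t):n_k>0\}>0$ and divide numerator and denominator by $M(t)$, so that
\[
R_{j,p}^{\omega}(u)=\frac{n_j\,\omega_j(t)/M(t)}{\sum_{k}n_k\,\omega_k(t)/M(t)}.
\]
Every quotient $\omega_k(t)/M(t)$ with $n_k>0$ lies in $[0,1]$, and because the maximum is attained the denominator is bounded below by $\mu:=\min\{n_k:n_k>0\}>0$ for all $t$, so no division by zero can occur. The one place where the hypothesis \emph{approach path} is really used is the assertion that these finitely many bounded quotients have limits, $\omega_k(t)/M(t)\to\rho_k\in[0,1]$; granting this, letting $t\to+\infty$ gives
\[
R_{j,p}^{\omega}(u)\longrightarrow\frac{n_j\,\rho_j}{\sum_{k}n_k\,\rho_k}\,,
\]
with denominator $\ge\mu>0$, and summing against $\mathbf{p}_j$ yields the claimed pointwise limit of $\mathbf{C}(u)$.

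Finally I would add a short remark identifying the limit and checking consistency: only the indices with $\rho_j>0$ survive, i.e.\ the control points carried by the fastest-growing group of weights among those active at $u$; at a knot where all the unbounded weights have vanishing basis value one simply descends to the next growth group, which recovers $\mathbf{C}(u_i)=\mathbf{p}_{i-p}$ and $\mathbf{C}(u_{i+1})=\mathbf{p}_i$. Taking $M(t)=\omega_2(t)$ in Example 1 reproduces both displayed formulas there ($\omega_2\asymp k\omega_1$ gives $\rho_1=1/k$, $\rho_2=1$; $\omega_2\asymp k\omega_1^{2}$ gives $\rho_1=0$, $\rho_2=1$). I expect the main obstacle to be definitional rather than computational: one must pin down ``approach path'' tightly enough that the quotients $\omega_k(t)/M(t)$ genuinely converge --- Theorem 1 shows this fails for the unrestricted multivariable limit --- after which only the elementary case-checking at the knots where several $n_j$ vanish is left.
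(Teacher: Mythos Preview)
Your proposal is correct and follows essentially the same approach as the paper: fix $u$, partition the weights into a dominant group of common order and a negligible remainder, divide through by the dominant magnitude, and read off the limit as the weighted combination of the $\mathbf{p}_j$ carried by the dominant group (the paper's $k_j$ are your $\rho_j$, its index set $\mathbf{j}_0$ is your $\{k:\rho_k>0\}$). You are more explicit than the paper about the role of the ``approach path'' hypothesis in guaranteeing that the ratios $\omega_k(t)/M(t)$ actually converge, and about restricting to indices with $n_k>0$ when handling the knot endpoints, but the underlying argument is the same.
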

\begin{proof}
  For a fixed point $u\in [u_i, u_{i+1})$, dividing the weights $\{\omega_i\}$ into two sets $\boldsymbol{\omega_{j_0}}=\{\omega _{j_0 }\}$
 and $\boldsymbol{\omega_{j_1}}=\{\omega_{j_1}\}$, where elements in $\boldsymbol{\omega_{j_0}}$ have the same order of magnitude, that is,  $\frac{\omega_{j_0}}{k_{j_0}}\asymp min \ \boldsymbol{\omega_{j_0}} \ (j_0 \in \boldsymbol{j_0}) $, and are negligible with respect to ones in $\boldsymbol{\omega_{j_1}}$, we have

\[
\mathop {\lim }\limits_{{\boldsymbol{\omega_{j_0}}} \to \infty } {\mathbf{C}}(u) = \left\{ {\begin{array}{*{20}c}
   {{{\sum\limits_{j \in {\mathbf{j}}_0  - \{ i + 1\} } {N_{j,p} (u)k_j {\mathbf{p}}_j } } \mathord{\left/
 {\vphantom {{\sum\limits_{j \in {\mathbf{j}}_0  - \{ i + 1\} } {N_{j,p} (u)k_j {\mathbf{p}}_j } } {\sum\limits_{j \in {\mathbf{j}}_0  - \{ i + 1\} } {N_{j,p} (u)k_j } }}} \right.
 \kern-\nulldelimiterspace} {\sum\limits_{j \in {\mathbf{j}}_0  - \{ i + 1\} } {N_{j,p} (u)k_j } }}} & {u = u_i }  \\
   {{{\sum\limits_{j \in {\mathbf{j}}_0 } {N_{j,p} (u)k_j {\mathbf{p}}_j } } \mathord{\left/
 {\vphantom {{\sum\limits_{j \in {\mathbf{j}}_0 } {N_{j,p} (u)k_j {\mathbf{p}}_j } } {\sum\limits_{j \in {\mathbf{j}}_0 } {N_{j,p} (u)k_j } }}} \right.
 \kern-\nulldelimiterspace} {\sum\limits_{j \in {\mathbf{j}}_0 } {N_{j,p} (u)k_j } }}} & {u \in (u_i ,u_{i + 1} )}  \\
   {{{\sum\limits_{j \in {\mathbf{j}}_0  - \{ i\} } {N_{j,p} (u)k_j {\mathbf{p}}_j } } \mathord{\left/
 {\vphantom {{\sum\limits_{j \in {\mathbf{j}}_0  - \{ i\} } {N_{j,p} (u)k_j {\mathbf{p}}_j } } {\sum\limits_{j \in {\mathbf{j}}_0  - \{ i\} } {N_{j,p} (u)k_j } }}} \right.
 \kern-\nulldelimiterspace} {\sum\limits_{j \in {\mathbf{j}}_0  - \{ i\} } {N_{j,p} (u)k_j } }}} & {u = u_{i + 1} }  \\

 \end{array} ,} \right.
\]
 which is the desired conclusion.
\end{proof}
\begin{theorem}
  If the knot vector ${\mathbf{\tilde{U}}} = \{ u_0 ,...,u_m \} $ be a strictly increasing sequence, where $i-p \notin \boldsymbol{{j_0}} \ \wedge  i \notin \  \boldsymbol{{j_0}}$, the NURBS curve on the interval $[u_i,u_{i+1}]$ is uniformly convergent. the limit curve is as following
  $$\mathop {\lim }\limits_{{\boldsymbol{\omega_{j_0}}} \to \infty } {\mathbf{C}}(u) ={{{\sum\limits_{j \in {\mathbf{j}}_0 } {N_{j,p} (u)k_j {\mathbf{p}}_j } } \mathord{\left/
 {\vphantom {{\sum\limits_{j \in {\mathbf{j}}_0 } {N_{j,p} (u)k_j {\mathbf{p}}_j } } {\sum\limits_{j \in {\mathbf{j}}_0 } {N_{j,p} (u)k_j } }}} \right.
 \kern-\nulldelimiterspace} {\sum\limits_{j \in {\mathbf{j}}_0 } {N_{j,p} (u)k_j } }}}, \ u\in [u_i,u_{i+1}].$$
\end{theorem}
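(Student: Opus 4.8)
The plan is to take the pointwise limit supplied by Theorem~2 and upgrade it to a uniform limit; the whole difficulty is to prevent the limiting denominator from vanishing on the \emph{closed} interval $[u_i,u_{i+1}]$. First I would record the structural facts forced by $\tilde{\mathbf U}$ being strictly increasing: on $[u_i,u_{i+1}]$ the only nonzero basis functions are $N_{i-p,p},\dots,N_{i,p}$; all $p+1$ of them are strictly positive on the open interval $(u_i,u_{i+1})$; and at each endpoint exactly one of them vanishes ($N_{i,p}$ at $u=u_i$, $N_{i-p,p}$ at $u=u_{i+1}$). Because the hypotheses say $i-p\notin\boldsymbol{j_0}$ and $i\notin\boldsymbol{j_0}$, the index sets appearing in the two endpoint branches of Theorem~2 reduce to $\boldsymbol{j_0}$, so all three branches coincide and the pointwise limit is the single function
\[
\mathbf g(u)\;=\;\frac{\sum_{j\in\mathbf{j}_0}N_{j,p}(u)\,k_j\,\mathbf p_j}{\sum_{j\in\mathbf{j}_0}N_{j,p}(u)\,k_j},\qquad u\in[u_i,u_{i+1}].
\]
It then remains to show $\displaystyle\sup_{u\in[u_i,u_{i+1}]}\bigl\|\mathbf C(u)-\mathbf g(u)\bigr\|\to 0$ along the approach path.

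Second, I would make the path quantitative. Set $\tau=\min\boldsymbol{\omega_{j_0}}$, so that along the path $\omega_j=k_j\tau+o(\tau)$ for $j\in\mathbf{j}_0$ while $\sum_{j\in\mathbf{j}_1}\omega_j=o(\tau)$. Writing $D(u)=\sum_{j\in\mathbf{j}_0}N_{j,p}(u)k_j$, dividing numerator and denominator of $\mathbf C(u)$ by $\tau$, and putting the result over a common denominator with $\mathbf g(u)$ yields an identity of the form
\[
\mathbf C(u)-\mathbf g(u)\;=\;\frac{\tfrac1\tau\Bigl(\bigl(\sum_{j\in\mathbf{j}_1}N_{j,p}(u)\omega_j\mathbf p_j\bigr)D(u)-\bigl(\sum_{j\in\mathbf{j}_0}N_{j,p}(u)k_j\mathbf p_j\bigr)\bigl(\sum_{j\in\mathbf{j}_1}N_{j,p}(u)\omega_j\bigr)\Bigr)}{D(u)\bigl(D(u)+\tfrac1\tau\sum_{j\in\mathbf{j}_1}N_{j,p}(u)\omega_j\bigr)}\;+\;\varepsilon_\tau(u),
\]
where $\varepsilon_\tau(u)$ gathers the contributions of the $o(\tau)$ errors $\omega_j-k_j\tau$ and is estimated in the same way. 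Since $0\le N_{j,p}\le1$, the $k_j$ and the control points $\mathbf p_j$ are fixed, and $\sum_{j\in\mathbf{j}_1}\omega_j=o(\tau)$, the numerator of this fraction tends to $0$ uniformly in $u$; and since $\sum_{j\in\mathbf{j}_1}N_{j,p}(u)\omega_j\ge0$, its denominator is at least $D(u)^2$. Everything therefore reduces to a uniform lower bound for $D$.

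The crux, and the step I expect to be the main obstacle, is to prove $\delta:=\min_{u\in[u_i,u_{i+1}]}D(u)>0$. As $D$ is a finite nonnegative combination of continuous functions on the compact interval $[u_i,u_{i+1}]$ it attains its infimum, so it suffices to check $D(u)>0$ for every $u$. On $(u_i,u_{i+1})$ every $N_{j,p}(u)$, $i-p\le j\le i$, is positive, $k_j>0$, and $\boldsymbol{j_0}\ne\varnothing$, so $D(u)>0$. At $u=u_i$ the only vanishing basis function is $N_{i,p}$, and $i\notin\boldsymbol{j_0}$, hence $D(u_i)=\sum_{j\in\mathbf{j}_0}N_{j,p}(u_i)k_j$ still contains a positive term; symmetrically, $i-p\notin\boldsymbol{j_0}$ forces $D(u_{i+1})>0$. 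This is precisely where both the strict monotonicity of $\tilde{\mathbf U}$ and the two index restrictions enter: at a knot of higher multiplicity, or if $i-p$ or $i$ lay in $\boldsymbol{j_0}$, the function $D$ would vanish at an endpoint — which is exactly the mechanism behind the discontinuous limit in Theorem~2 and the isolated values $\mathbf p_0,\mathbf p_3$ in Example~1 — and no uniform estimate could survive there. With $\delta>0$ secured, the displayed identity gives, for all sufficiently large $\tau$,
\[
\sup_{u\in[u_i,u_{i+1}]}\bigl\|\mathbf C(u)-\mathbf g(u)\bigr\|\;\le\;\frac{C}{\delta^{2}}\cdot\frac{\sum_{j\in\mathbf{j}_1}\omega_j}{\tau}\;+\;\sup_{u}\|\varepsilon_\tau(u)\|\;\xrightarrow{\ \tau\to\infty\ }\;0,
\]
which is the claimed uniform convergence.
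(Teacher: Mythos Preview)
Your argument is correct and hinges on the same key observation as the paper's: under the hypotheses $i-p\notin\boldsymbol{j_0}$, $i\notin\boldsymbol{j_0}$ and strict monotonicity of $\tilde{\mathbf U}$, the ``dominant'' basis functions stay bounded away from zero on the \emph{closed} interval $[u_i,u_{i+1}]$, which is exactly what furnishes a uniform lower bound on the denominator. The paper, however, organizes the proof differently. Rather than writing out $\mathbf C(u)-\mathbf g(u)$ over a common denominator and bounding it by $C\delta^{-2}\cdot o(1)$ with $\delta=\min_u D(u)$, the paper reduces ``without loss of generality'' to the case of a single weight $\omega_k\to\infty$ with $k\neq i-p$, $k\neq i$, and estimates the rational basis function directly:
\[
\bigl|R_{k,p}^{\omega}(u)-1\bigr|\;\le\;\frac{\sum_{j\neq k}N_{j,p}(u)\omega_j}{N_{k,p}(u)\,\omega_k}\;\le\;\frac{M}{m\,\omega_k},
\]
where $M=\max_u\sum_{j\neq k}N_{j,p}(u)\omega_j$ and $m=\min_u N_{k,p}(u)>0$, the latter positivity holding on $[u_i,u_{i+1}]$ precisely because of the hypotheses. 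Your version is more complete: it handles the general multi-weight path in one pass, makes the role of the approach-path parametrization $\omega_j=k_j\tau+o(\tau)$ explicit, and actually justifies why the three branches of Theorem~2 collapse to the single formula $\mathbf g(u)$. The paper's version is shorter but leaves the passage from one weight to several implicit; its merit is that the single inequality above makes the dependence on $\omega_k$ transparent without any common-denominator algebra.
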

\begin{proof}
  Without loss of generality,  consider one weight $\omega_k$ $(k\neq i-p \wedge k \neq i)$ approaches positive infinity. When $\omega_k > \Omega (\varepsilon)=\frac{1}{\varepsilon}\frac{M}{ m}$, where $$M=max \sum\limits_{j = i - p \wedge j \ne k}^i {N_{j,p} (u)\omega _j }, \ \ m= min N_{k,p}(u), $$
   we have
  \[
\left| {R_{j,p}^\omega   - 1} \right| = \left| {\frac{{\sum\limits_{j = i - p \wedge j \ne k}^i {N_{j,p} (u)\omega _j } }}
{{\sum\limits_{j = i - p}^i {N_{j,p} (u)\omega _j } }}} \right| < \left| {\frac{{\sum\limits_{j = i - p \wedge j \ne k}^i {N_{j,p} (u)\omega _j } }}
{{N_{k,p} (u)\omega _k }}} \right| < \varepsilon
\]
 or equivalently,
\[
\omega _k  > \frac{1}
{\varepsilon }\frac{{\sum\limits_{j = i - p \wedge j \ne k}^i {N_{j,p} (u)\omega _j } }}
{{N_{k,p} (u)}} = \frac{1}
{\varepsilon }\frac{M}
{m},
\]
where $N_{k,p}(u)\neq 0$ on the interval $[u_i,u_{i+1}]$ of knot vector ${\mathbf{\tilde{U}}}$. By the properties of continuous function on the closed interval, $M$ and $m$ do not depend on the parameter $u$, which imply the $R_{j,p}^\omega$  is uniformly convergent and then the NURBS curve is also uniformly convergent.
\end{proof}

Finally, with the convex hull property of NURBS curve and Bounded convergence theorem \cite{Stein}, we obtain

\begin{theorem}
  The NURBS curve ${\mathbf{C}}(u)$ on the interval $[u_i, u_{i+1}]$ is $L^1$ convergence when  $\boldsymbol{\omega_{j_0}}\rightarrow +\infty$.
\end{theorem}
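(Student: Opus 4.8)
The plan is to invoke the Bounded Convergence Theorem directly. The space here is the bounded interval $[u_i,u_{i+1}]$ equipped with Lebesgue measure, which is finite, so $L^1$ convergence will follow from (a) pointwise convergence almost everywhere and (b) a uniform bound on the sequence of functions $\mathbf{C}(u)$ (indexed by the growing weights along the fixed approach path). Step (a) is already in hand: Theorem 2 gives pointwise convergence of $\mathbf{C}(u)$ on $[u_i,u_{i+1})$ as $\boldsymbol{\omega_{j_0}}\to\infty$, and the single endpoint $u_{i+1}$ is a null set, so we have convergence a.e.\ to the limit function described there.

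For step (b), the key observation is the convex hull property: for every admissible choice of positive weights, $\mathbf{C}(u)=\sum_{j=i-p}^i R_{j,p}^\omega(u)\,\mathbf{p}_j$ is a convex combination of the control points $\mathbf{p}_{i-p},\dots,\mathbf{p}_i$, since the rational basis functions are nonnegative and sum to $1$. Hence $\|\mathbf{C}(u)\|\le \max_{i-p\le j\le i}\|\mathbf{p}_j\|=:K$ for all $u$ and all weights along the path. This constant function $K$ is integrable on the finite interval $[u_i,u_{i+1}]$, so it serves as the dominating function required by the Bounded Convergence Theorem. Applying that theorem to each coordinate of the vector-valued map $\mathbf{C}(u)$ then yields
\[
\lim_{\boldsymbol{\omega_{j_0}}\to\infty}\int_{u_i}^{u_{i+1}}\bigl\|\mathbf{C}(u)-\mathbf{C}_\infty(u)\bigr\|\,du=0,
\]
where $\mathbf{C}_\infty$ denotes the pointwise limit, which is exactly $L^1$ convergence.

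I do not anticipate a serious obstacle, since both ingredients are either proved earlier or standard; the only point requiring a little care is making the notion of "$\boldsymbol{\omega_{j_0}}\to\infty$" precise as a one-parameter (or sequential) limit so that the Bounded Convergence Theorem, which is stated for sequences, genuinely applies — one takes an arbitrary sequence of weight vectors along the approach path with the minimal entry of $\boldsymbol{\omega_{j_0}}$ tending to infinity, applies the theorem to that sequence, and notes that the limit is independent of the sequence by Theorem 2. A minor secondary point is that the limit function $\mathbf{C}_\infty$ is itself a convex combination of a subset of the control points and hence measurable and bounded, so the difference is genuinely in $L^1$; this is immediate from the formula in Theorem 2.
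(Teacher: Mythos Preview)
Your proposal is correct and matches the paper's own approach exactly: the paper simply cites the convex hull property of the NURBS curve together with the Bounded Convergence Theorem, which is precisely your (a)+(b) argument. If anything, you have supplied more detail than the paper does, including the sequential formulation and the observation that the pointwise limit is itself bounded and measurable.
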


\section*{Conclusion}
We analysis convergence of NURBS curve in this paper. The geometric meaning of limit curve is our future research.

\section*{Acknowledgements}
\label{}
This work is supported by the Natural Science Basic Research Plan in Shaanxi Province of China (No. 2013JM1004) and the Fundamental Research Funds for the Central Universities (No. GK201102025).



\end{document}